\newtheorem{thm}{Theorem}[section]
\newtheorem{lem}[thm]{Lemma}
\theoremstyle{definition}
\theoremstyle{remark}
\newtheorem{rem}[thm]{Remark}
\numberwithin{equation}{section}
\def\XXint#1#2#3{{\setbox0=\hbox{$#1{#2#3}{\int}$}
  \vcenter{\hbox{$#2#3$}}\kern-.5\wd0}}
\begin{document}

\title[Smooth metric measure spaces]
{$L_f^p$ harmonic 1-forms on complete non-compact smooth metric measure spaces}

\author{Jiuru Zhou and Peng Zhu}
\address[J.R. Zhou]{School of Mathematical Sciences, Yangzhou
University, Yangzhou, Jiangsu, China 225002.}
\email{\href{mailto: J.R.
Zhou<zhoujiuru@yzu.edu.cn>}{zhoujiuru@yzu.edu.cn}}

\address[P. Zhu]{School of Mathematics and Physics,
Jiangsu University of Technology, Changzhou,
Jiangsu, People¡¯s Republic of China 213001.}
\email{\href{mailto: P. Zhu<zhupeng2004@126.com>}{zhupeng2004@126.com}}


\subjclass[2010]{53C21}

\keywords{metric measure space, $L^p_f$ harmonic $1$-form, weighted Poincar\'{e} inequality, first spectrum of $f$-Laplacian}

\date{\today}

\begin{abstract}
 This paper studies complete non-compact smooth metric measure space $(M^n,g,\mathrm{e}^{-f}\mathrm{d}v)$ with positive first spectrum $\lambda_1(\Delta_f)$ or satisfying a weighted Poincar\'e inequality with weight function $\rho$. We establish two splitting and vanishing theorems for $L_f^p$ harmonic $1$-forms under the assumption that $m$-Bakry-\'Emery Ricci curvature $\mathrm{Ric}_{m,n}\geq -a\lambda_1(\Delta_f)$ or $\mathrm{Ric}_{m,n}\geq -a\rho-b$ for particular constants $a$ and $b>0$. These results are inspired by the work of Han-Lin and are $L_f^p$ generalizations of previous works by Dung-Sung and Vieira for $L^2$ harmonic $1$-forms.
\end{abstract}


\maketitle


\section{Introduction}
For a complete non-compact Riemannian manifold $M^n$ with positive first spectrum $\lambda_1(M)$, Li and Wang \cite{LW01} have shown that the space of $L^2$ harmonic $1$-forms $\mathcal{H}^1(L^2(M))$ vanishes, provided the Ricci curvature has a negative lower bound $-a\lambda_1(M)$ for some $0<a<\dfrac{n}{n-1}$. Later, Lam \cite{La10} generalized this on manifolds satisfying a weighted Poincar\'e inequality with growth rate assumption on the weight function $\rho$. Recently, Vieira \cite{Vi16} removed the sign and growth rate assumptions on $\rho$. Interestingly, Cheng and Zhou developed Li-Wang's results in order to suitably apply to minimal hypersurfaces, see Theorem 1.2 in \cite{CZ09} and its applications. Besides, Dung-Sung's Theorem 2.2 in \cite{DS14} considered the critical case when the Ricci curvature has lower bound $-\dfrac{n}{n-1}\lambda_1(M)$, and get that either $\mathcal{H}^1(L^2(M))$ vanishes or the universal cover $\widetilde{M}$ splits as $\mathbb{R}\times N^{n-1}$; Vieira's Theorem 4 and 7 in \cite{Vi16} showed similar results provided $\mathrm{Ric}\geq -a\rho-b$ for some $0<a<\dfrac{n}{n-1}, b>0$ and the first spectrum $\lambda_1(M)\geq\dfrac{b}{\frac{n}{n-1}-a}$.

For a metric measure space $(M,g,\mathrm{e}^{-f}\mathrm{d}v)$, there are corresponding works on vanishing and splitting results related to the space of $L_f^2$ harmonic $1$-forms, see \cite{Du12,DS13,Fu14,Vi13,Wa12,Wu13,Zh20} and references therein. In particular, Han-Lin's Theorem 1.4 in \cite{HL17} considers a complete non-compact smooth metric measure space satisfying a weighted Poincar\'e inequality with a weight function $\rho$, and obtained that the space of $L_f^p$ harmonic $1$-forms $\mathcal{H}^1(L_f^p(M))$ vanishes, if the $m$-Bakry-\'Emery Ricci curvature $\mathrm{Ric}_{m,n}\geq -a\rho$ for some $a<\dfrac{4[(m-1)p-(m-2)]}{(m-1)p^2}$ with $p\geq\dfrac{m-2}{m-1}$, which can be seen as an $L_f^p$ generalization of corresponding Theorems in \cite{LW01} and \cite{Vi16}. Inspired by \cite{HL17,DS14,Vi16}, in this paper, we want to consider the geometric structure of $(M,g,\mathrm{e}^{-f}\mathrm{d}v)$ for the critical case when
$$
 \mathrm{Ric}_{m,n}\geq -\dfrac{4[(m-1)p-(m-2)]}{(m-1)p^2}\lambda_1(\Delta_f),
$$
where $\lambda_1(\Delta_f)$ is the first spectrum of the weighted Laplacian $\Delta_f$. The following is the structure of this paper:

In section 2, we recall and establish definitions, basic facts and some lemmas for metric measure spaces. Section 3 contains two main results about vanishing of space of $L_f^p$ harmonic $1$-forms and splitting for $M$. More specifically, Theorem \ref{main1} says for the critical case of assumptions on $\mathrm{Ric}_{m,n}$ with respect to Theorem 1.4 in \cite{HL17} for $\rho=\lambda_1(\Delta_f)$, either the space of $L^p_f$ harmonic $1$-forms vanishes or the universal cover $\widetilde{M}$ splits; and Theorem \ref{main2} is a vanishing and splitting result by assuming that $\mathrm{Ric}_{m,n}\geq -a\rho -b$ for some $a<\dfrac{4[(m-1)p-(m-2)]}{(m-1)p^2}, b>0$ with $p\geq\dfrac{m-2}{m-1}$ and $\lambda_1(M)\geq\dfrac{b}{\frac{4[(m-1)p-(m-2)]}{(m-1)p^2}-a}$.

\section{Preliminaries}
Recall that a smooth metric measure space $(M^n,g,\mathrm{e}^{-f}\mathrm{d}v)$ is a smooth Riemannian manifold $(M,g)$ together with a smooth function $f$ and a measure $\mathrm{e}^{-f}\mathrm{d}v$. For any constant $m\geq n$, the $m$-Bakry-\'{E}mery Ricci curvature is defined as
$$
 \mathrm{Ric}_{m,n}=\mathrm{Ric}+\nabla^2 f-\frac{\nabla f\otimes \nabla f}{m-n}.
$$
When $m=n$, we need $f$ to be a constant, then $\mathrm{Ric}_{m,n}$ is just the Ricci tensor. Hence, for the proofs in the following sections, we are actually dealing with the case when $m>n$, and $m=n$ case is similar. Analog to $L^2$ differential forms, a differential form $\omega$ is called an $L^p_f$ differential form if
$$
 \int_M |\omega|^p\cdot\mathrm{e}^{-f}\mathrm{d}v<\infty,
$$
and the space of $L_f^p$ differential $k$-forms is denoted by $\Omega^k(L_f^p(M))$.

The formal adjoint of the exterior derivative $\mathrm{d}$ with respect to the $L^2_{f}$ inner product is
$$
 \delta_f=\delta+\iota_{\nabla f}.
$$
Then the $f$-Hodge Laplacian operator is defined as
$$
 \Delta_f=-(\mathrm{d}\delta_f+\delta_f\mathrm{d}).
$$
In this paper, similar to \cite[3622]{Li09}, we define the space of $L_f^p$ harmonic $k$-forms to be
$$\mathcal{H}^k(L_f^p(M)):=\{ \alpha\in\Omega^k(L_f^p(M))~|~\mathrm{d}\alpha=0, \delta_f\alpha=0 \},$$ and define
$$\mathcal{H}_{k}(L_f^p(M)):=\ker (\Delta_f)\cap\Omega^k(L_f^p(M)).$$
By Lemma 2.2 in \cite{Vi13}, when $p=2$, we have $\mathcal{H}^k(L_f^2(M))=\mathcal{H}_{k}(L_f^2(M))$. Nevertheless, this is not the case for general $p$.
Actually, Alexandru-Rugina \cite{Al96} found $L^p~(p\not=2)$ integral $k$-forms $\alpha$ satisfying $\Delta \alpha=0$, which are neither closed nor co-closed, on hyperbolic space $\mathbb{H}^n$ for $n\geq 3$.

Recall that the first spectrum of the $f$-Laplacian $\Delta_f$ is given by
$$\lambda_1(\Delta_f)=\inf _{\psi \in C_{0}^{\infty}(M)} \frac{\int_ {M}|\nabla \psi|^{2} \cdot\mathrm{e}^{-f}\mathrm{d}v}{\int_{M} \psi^{2} \cdot\mathrm{e}^{-f}\mathrm{d}v},$$
and a natural generalization of $\lambda_1(\Delta_f)$ being positive is the weighted Poincar\'e inequality,
\begin{eqnarray}\label{wPoincare}
  \int_M \rho(x)\psi^2\cdot\mathrm{e}^{-f}\mathrm{d}v\leq\int_M |\nabla\psi|^2\cdot\mathrm{e}^{-f}\mathrm{d}v,
 \end{eqnarray}
for any test function $\psi\in C^{\infty}_0(M)$. Fu \cite{Fu14} discussed the existence of such metric measure spaces and gave explicit examples.

In the proofs of the sequel, we will always use the following cut-off function,
  \begin{eqnarray}\label{cutoff}
  \phi=\left\{
  \begin{array}{ccc}
    &1,&~~~~\mathrm{on}~B(R),\\
    &0,&~~~~\mathrm{on}~M\backslash B(2R),
  \end{array}
  \right.
 \end{eqnarray}
 such that $\displaystyle{|\nabla\phi|^2\leq \frac{C}{R^2}}$ on $B(2R) \backslash B(R)$.
\begin{lem}\label{lem1}
 Suppose $(M,g,\mathrm{e}^{-f}\mathrm{d}v)$ is a complete non-compact smooth metric measure space of dimension $n\geq 3$, and $A,B,p$ are constants satisfying $A<1, B>0, p-1+A>0$. If $h$ is a non-negative $L^p_f$ integrable function on $M$ satisfying the differential inequality
 \begin{eqnarray}\label{bochner inequ1}
  h\Delta_f h\geq A|\nabla h|^2-Bh^2
 \end{eqnarray}
 in the weak sense, then
 $$
  \int_M |\nabla h^{\frac{p}{2}}|\cdot\mathrm{e}^{-f}\mathrm{d}v\leq \frac{p^2 B}{4(p-1+A)}\int_M h^{p}\cdot\mathrm{e}^{-f}\mathrm{d}v.
 $$
\end{lem}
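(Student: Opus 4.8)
The plan is to test the differential inequality \eqref{bochner inequ1} against the function $\phi^2 h^{p-2}$, with $\phi$ the cut-off from \eqref{cutoff}, integrate by parts, absorb the resulting gradient terms, and then let $R\to\infty$. Because $h^{p-2}$ may blow up on $\{h=0\}$ when $p<2$, I would really work with $\phi^2(h+\delta)^{p-2}$ and send $\delta\to0^+$ at the end --- harmless since $\nabla h=0$ a.e.\ on $\{h=0\}$ --- but for clarity I write $h^{p-2}$ below. Writing the weak form of \eqref{bochner inequ1} as
\[
 -\int_M\langle\nabla h,\nabla(h\psi)\rangle\,\mathrm{e}^{-f}\mathrm{d}v\ \geq\ \int_M\bigl(A|\nabla h|^2-Bh^2\bigr)\psi\,\mathrm{e}^{-f}\mathrm{d}v ,\qquad 0\leq\psi\in C_0^\infty(M),
\]
and substituting $\psi=\phi^2 h^{p-2}$, I would expand using $\nabla(h\psi)=\psi\nabla h+h\nabla\psi$ and $\nabla\psi=2\phi h^{p-2}\nabla\phi+(p-2)\phi^2 h^{p-3}\nabla h$; collecting the $|\nabla h|^2$-terms then yields
\[
 (p-1+A)\int_M\phi^2 h^{p-2}|\nabla h|^2\,\mathrm{e}^{-f}\mathrm{d}v\ \leq\ -2\int_M\phi h^{p-1}\langle\nabla h,\nabla\phi\rangle\,\mathrm{e}^{-f}\mathrm{d}v+B\int_M\phi^2 h^p\,\mathrm{e}^{-f}\mathrm{d}v .
\]
Using $|\nabla h^{p/2}|^2=\tfrac{p^2}{4}h^{p-2}|\nabla h|^2$ and $h^{p-1}\nabla h=\tfrac{2}{p}h^{p/2}\nabla h^{p/2}$, this becomes
\[
 \frac{4(p-1+A)}{p^2}\int_M\phi^2|\nabla h^{p/2}|^2\,\mathrm{e}^{-f}\mathrm{d}v\ \leq\ -\frac{4}{p}\int_M\phi h^{p/2}\langle\nabla h^{p/2},\nabla\phi\rangle\,\mathrm{e}^{-f}\mathrm{d}v+B\int_M\phi^2 h^p\,\mathrm{e}^{-f}\mathrm{d}v .
\]

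Next I would estimate the cross term by Young's inequality: for any $\eps>0$,
\[
 \frac{4}{p}\,\bigl|\phi h^{p/2}\langle\nabla h^{p/2},\nabla\phi\rangle\bigr|\ \leq\ \eps\,\phi^2|\nabla h^{p/2}|^2+\frac{4}{p^2\eps}\,h^p|\nabla\phi|^2 .
\]
Since $p-1+A>0$, choosing $0<\eps<\tfrac{4(p-1+A)}{p^2}$ and absorbing gives
\[
 \Bigl(\frac{4(p-1+A)}{p^2}-\eps\Bigr)\int_M\phi^2|\nabla h^{p/2}|^2\,\mathrm{e}^{-f}\mathrm{d}v\ \leq\ \frac{4}{p^2\eps}\int_M h^p|\nabla\phi|^2\,\mathrm{e}^{-f}\mathrm{d}v+B\int_M\phi^2 h^p\,\mathrm{e}^{-f}\mathrm{d}v .
\]
By \eqref{cutoff} and $h\in L_f^p(M)$, the first term on the right is at most $\tfrac{C}{R^2}\int_{B(2R)\backslash B(R)}h^p\,\mathrm{e}^{-f}\mathrm{d}v\to0$ as $R\to\infty$, while the last term is at most $\int_M h^p\,\mathrm{e}^{-f}\mathrm{d}v<\infty$; hence $\int_M\phi^2|\nabla h^{p/2}|^2\,\mathrm{e}^{-f}\mathrm{d}v$ stays bounded, and letting $R\to\infty$ (Fatou on the left, dominated convergence on the right) yields
\[
 \Bigl(\frac{4(p-1+A)}{p^2}-\eps\Bigr)\int_M|\nabla h^{p/2}|^2\,\mathrm{e}^{-f}\mathrm{d}v\ \leq\ B\int_M h^p\,\mathrm{e}^{-f}\mathrm{d}v .
\]
Letting $\eps\to0^+$ (and $\delta\to0^+$ in the suppressed approximation) produces the sharp constant $\tfrac{p^2 B}{4(p-1+A)}$, which is the claimed inequality.

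The integration by parts and Young's inequality are routine; the steps requiring care --- and the ones I expect to be the main obstacle --- are, first, justifying $\phi^2 h^{p-2}$ (equivalently $\phi^2(h+\delta)^{p-2}$) as an admissible test function when $p<2$, which uses that the weak inequality forces $h\in W^{1,2}_{\mathrm{loc}}$ with $\nabla h=0$ a.e.\ on $\{h=0\}$ so that $\nabla h^{p/2}$ and all the integrals are well defined, and second, the ordering of the two limits: one must let $R\to\infty$ for each fixed $\eps$ and only then let $\eps\to0$, since the reverse order would destroy the exact value $\tfrac{p^2B}{4(p-1+A)}$ of the constant.
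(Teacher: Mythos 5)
Your proof is correct and follows essentially the same route as the paper's: multiply by the cut-off $\phi^2$, integrate by parts, absorb the cross term via Young's inequality with a parameter $\eps$, let $R\to\infty$ and only then $\eps\to 0^+$ to get the sharp constant $\tfrac{p^2B}{4(p-1+A)}$. The only difference is cosmetic --- you test the weak inequality against $\phi^2 h^{p-2}$ while the paper first passes to $h^{q}$ with $q=p/2$ and tests against $\phi^2 h^{q}$ --- and your remark about regularizing with $(h+\delta)^{p-2}$ when $p<2$ is a point of care the paper glosses over.
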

\begin{proof}
 Let $q=\frac{p}{2}$, and by (\ref{bochner inequ1}), we have
 \begin{eqnarray*}
  h^q\Delta_f h^q &=& h^q[q(q-1)h^{q-2}|\nabla h|^2+qh^{q-1}\Delta_f h]\\
  &\geq& \frac{q-1}{q}|\nabla h^q|^2+qAh^{2q-2}|\nabla h|^2-qBh^{2q}\\
  &=& (1-\frac{1-A}{q})|\nabla h^q|^2-qBh^{2q}.
 \end{eqnarray*}

 Multiplying both sides by the cut-off function $\phi^2$,
 $$\int_M \phi^2h^q\Delta h^q\cdot\mathrm{e}^{-f}\geq (1-\frac{1-A}{q})\int_M \phi^2|\nabla h^q|^2\cdot\mathrm{e}^{-f} -qB\int_M \phi^2 h^{2q}\cdot\mathrm{e}^{-f},$$
 where
 \begin{eqnarray*}
  &&\int_M \phi^2h^q\Delta h^q\cdot\mathrm{e}^{-f}\\
  &=& -\int_M \phi^2|\nabla h^q|^2\cdot\mathrm{e}^{-f}-2\int_M \phi h^q\langle \nabla \phi,\nabla h^q\rangle\cdot\mathrm{e}^{-f}\\
  &\leq& -\int_M \phi^2|\nabla h^q|^2\cdot\mathrm{e}^{-f}+\varepsilon \int_M \phi^2|\nabla h^q|^2\cdot\mathrm{e}^{-f}+\frac{1}{\varepsilon}\int_M h^{2q}|\nabla \phi|^2\cdot\mathrm{e}^{-f},
 \end{eqnarray*}
 for any $\varepsilon>0$.
 Hence, by the choice of $\phi$,
 $$
 \left( 2-\frac{1-A}{q}-\varepsilon \right)\int_M \phi^2|\nabla h^q|^2\cdot\mathrm{e}^{-f}\leq qB\int_M \phi^2 h^{2q}\cdot\mathrm{e}^{-f}+\frac{C}{\varepsilon R^2}\int_M h^{2q}\cdot\mathrm{e}^{-f}.
 $$
 Letting $R\rightarrow\infty$, it holds
 $$
 \left( 2-\frac{1-A}{q}-\varepsilon \right)\int_M |\nabla h^q|^2\cdot\mathrm{e}^{-f}\leq qB\int_M  h^{2q}\cdot\mathrm{e}^{-f},
 $$
 for any $\varepsilon>0$, so
 $$
 \left( 2-\frac{1-A}{q} \right)\int_M |\nabla h^q|^2\cdot\mathrm{e}^{-f}\leq qB\int_M  h^{2q}\cdot\mathrm{e}^{-f}.
 $$
 \end{proof}

The following is an $L_f^p$ generalization of Lemma 2 and 3 in \cite{Vi16},
\begin{lem}\label{lem2}
 Suppose $(M,g,\mathrm{e}^{-f}\mathrm{d}v)$ is a complete non-compact smooth metric measure space of dimension $n\geq 3$ satisfying a weighted Poincar\'e inequality with weight function $\rho$, and $A,a,b,p$ are constants satisfying $A<1, a>0, b>0, 4(p-1+A)-p^2 a>0$. If $h$ is a non-negative $L^p_f$ integrable function on $M$ satisfying the differential inequality
 \begin{eqnarray}\label{bochner inequ2}
  h\Delta_f h\geq A|\nabla h|^2-a\rho h^2-bh^2
 \end{eqnarray}
 in the weak sense, then
 \begin{eqnarray}\label{dirichlet2}
  \int_M |\nabla h^{\frac{p}{2}}|\cdot\mathrm{e}^{-f}\mathrm{d}v\leq \frac{p^2 b}{4(p-1+A)-p^2a}\int_M h^{p}\cdot\mathrm{e}^{-f}\mathrm{d}v.
 \end{eqnarray}
 Moreover,
 \begin{eqnarray}\label{eigen inequ}
  \lambda_1(\Delta_f)\int_M  h^p\cdot\mathrm{e}^{-f}\mathrm{d}v\leq \int_M |\nabla h^{\frac{p}{2}}|^2\cdot\mathrm{e}^{-f}\mathrm{d}v.
 \end{eqnarray}
\end{lem}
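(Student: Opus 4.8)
The plan is to run the argument of Lemma~\ref{lem1} again, the one new ingredient being the weighted Poincar\'e inequality \eqref{wPoincare}, which will be used to absorb the extra term $-a\rho h^2$. Put $q=\frac p2$. Exactly as in the proof of Lemma~\ref{lem1}, \eqref{bochner inequ2} gives, in the weak sense,
\[
 h^q\Delta_f h^q\ \geq\ \Bigl(1-\tfrac{1-A}{q}\Bigr)|\nabla h^q|^2-qa\,\rho\, h^{2q}-qb\, h^{2q}.
\]
Multiplying by $\phi^2$, integrating against $\mathrm{e}^{-f}\mathrm{d}v$, integrating by parts on the left-hand side and applying Young's inequality with a parameter $\eps>0$ to the cross term $\phi h^q\langle\nabla\phi,\nabla h^q\rangle$ --- word for word as in Lemma~\ref{lem1} --- one arrives at
\[
 \Bigl(2-\tfrac{1-A}{q}-\eps\Bigr)\!\int_M\!\phi^2|\nabla h^q|^2\mathrm{e}^{-f}\ \leq\ qa\!\int_M\!\phi^2\rho\, h^{2q}\mathrm{e}^{-f}+qb\!\int_M\!\phi^2 h^{2q}\mathrm{e}^{-f}+\frac{C}{\eps R^2}\!\int_M\! h^{2q}\mathrm{e}^{-f}.
\]

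Next I would estimate $\int_M\phi^2\rho h^{2q}\mathrm{e}^{-f}$ from above by applying \eqref{wPoincare} to the compactly supported test function $\psi=\phi h^q$ (admissible after the usual approximation of $h^q$ by smooth functions), expanding $|\nabla(\phi h^q)|^2$ and once more using Young's inequality with a parameter $\eps'>0$ together with the bound on $|\nabla\phi|$; this yields
\[
 qa\!\int_M\!\phi^2\rho\, h^{2q}\mathrm{e}^{-f}\ \leq\ qa(1+\eps')\!\int_M\!\phi^2|\nabla h^q|^2\mathrm{e}^{-f}+\frac{C'}{R^2}\!\int_M\! h^{2q}\mathrm{e}^{-f}.
\]
Feeding this back in and collecting the terms containing $\int_M\phi^2|\nabla h^q|^2\mathrm{e}^{-f}$ gives
\[
 \Bigl(2-\tfrac{1-A}{q}-\eps-qa(1+\eps')\Bigr)\!\int_M\!\phi^2|\nabla h^q|^2\mathrm{e}^{-f}\ \leq\ qb\!\int_M\!\phi^2 h^{2q}\mathrm{e}^{-f}+\frac{C''}{R^2}\!\int_M\! h^{2q}\mathrm{e}^{-f}.
\]
Since $h^{2q}=h^p$ is $L^1_f$-integrable and $|\nabla\phi|^2\leq C/R^2$, letting $R\to\infty$ removes the last term; then letting $\eps,\eps'\to0$ leaves
\[
 \Bigl(2-\tfrac{1-A}{q}-qa\Bigr)\!\int_M\!|\nabla h^q|^2\mathrm{e}^{-f}\ \leq\ qb\!\int_M\! h^{2q}\mathrm{e}^{-f}.
\]
As $2-\tfrac{1-A}{q}-qa=\dfrac{4(p-1+A)-p^2a}{2p}>0$ by hypothesis and $qb=\tfrac{pb}{2}$, this rearranges to \eqref{dirichlet2}; in particular $\nabla h^{p/2}\in L^2_f(M)$.

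For \eqref{eigen inequ} I would insert $\psi=\phi h^q$ directly into the variational definition of $\lambda_1(\Delta_f)$, expand $|\nabla(\phi h^q)|^2$, and apply Young's inequality with parameter $\eps>0$ and the bound on $|\nabla\phi|$ to obtain
\[
 \lambda_1(\Delta_f)\!\int_M\!\phi^2 h^{2q}\mathrm{e}^{-f}\ \leq\ (1+\eps)\!\int_M\!\phi^2|\nabla h^q|^2\mathrm{e}^{-f}+\frac{C}{R^2}\Bigl(1+\tfrac1\eps\Bigr)\!\int_M\! h^{2q}\mathrm{e}^{-f},
\]
and then let $R\to\infty$ (using $h^p\in L^1_f$ and the just-established $\nabla h^{p/2}\in L^2_f$) followed by $\eps\to0$. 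The genuinely delicate points are: (i) justifying that $\psi=\phi h^q$ may be used as a test function in \eqref{wPoincare} and in the definition of $\lambda_1(\Delta_f)$ even though $h$ is only weakly differentiable, which is the standard Lipschitz cut-off approximation; and (ii) keeping the two Young parameters $\eps$ and $\eps'$ independent so that after the limits the coefficient of $\int_M|\nabla h^q|^2\mathrm{e}^{-f}$ is exactly $2-\tfrac{1-A}{q}-qa$ and not something strictly smaller --- this is precisely what produces the sharp constant $\dfrac{p^2b}{4(p-1+A)-p^2a}$, and it is the only step requiring real care.
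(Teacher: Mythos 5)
Your proposal is correct and follows essentially the same route as the paper: test the differentiated inequality for $h^q$ against $\phi^2$, absorb the $\rho$-term via the weighted Poincar\'e inequality applied to $\phi h^q$, let $R\to\infty$ and then send the Young parameters to zero to recover the sharp constant, and finally plug $\phi h^q$ into the variational characterization of $\lambda_1(\Delta_f)$ for \eqref{eigen inequ}. The only (immaterial) difference is that you use two independent Young parameters where the paper combines the cross terms into a single estimate with coefficient $|qa-1|\varepsilon$; both yield the same limit.
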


\begin{proof}
 Similar to the proof of Lemma \ref{lem1},
 \begin{eqnarray*}
  h^q\Delta_f h^q
  \geq \left(1-\frac{1-A}{q}\right)|\nabla h^q|^2-qa\rho h^{2q}-qbh^{2q},
 \end{eqnarray*}
 where $2q=p$.
 Multiplying both sides of by the cut-off function $\phi^2$ and integrating by parts, it holds
 \begin{eqnarray*}
  &&
  \left(1-\frac{1-A}{q}\right)\int_M \phi^2|\nabla h^q|^2\cdot\mathrm{e}^{-f} -qa\int_M \rho(\phi h^{q})^2\cdot\mathrm{e}^{-f}-qb\int_M \phi^2h^{2q}\cdot\mathrm{e}^{-f}\\
  &\leq&
   -\int_M \phi^2|\nabla h^q|^2\cdot\mathrm{e}^{-f}-2\int_M \phi h^q\langle \nabla \phi,\nabla h^q\rangle\cdot\mathrm{e}^{-f}
 \end{eqnarray*}
 Hence, combining the weighted Poincar\'e inequality, we obtain
 \begin{eqnarray*}
  &&
  \left(2-\frac{1-A}{q}\right)\int_M \phi^2|\nabla h^q|^2\cdot\mathrm{e}^{-f} \\
  &\leq&
   qa\int_M \rho(\phi h^{q})^2\cdot\mathrm{e}^{-f}-2\int_M \phi h^q\langle \nabla \phi,\nabla h^q\rangle\cdot\mathrm{e}^{-f}+qb\int_M \phi^2h^{2q}\cdot\mathrm{e}^{-f}\\
   &\leq&
   qa\int_M |\nabla(\phi h^{q})|^2\cdot\mathrm{e}^{-f}-2\int_M \phi h^q\langle \nabla \phi,\nabla h^q\rangle\cdot\mathrm{e}^{-f}+qb\int_M \phi^2h^{2q}\cdot\mathrm{e}^{-f}\\
   &\leq&
   (qa+|qa-1|\varepsilon)\int_M \phi^{2}|\nabla h^q |^2\cdot\mathrm{e}^{-f}\\
   &&+\left(\frac{|qa-1|}{\varepsilon}+qa\right)\int_M h^{2q}|\nabla\phi |^2\cdot\mathrm{e}^{-f}+qb\int_M \phi^2h^{2q}\cdot\mathrm{e}^{-f},
 \end{eqnarray*}
 then, by the choice of $\phi$,
 \begin{eqnarray*}
  &&
  \left(2-\frac{1-A}{q}-qa-|qa-1|\varepsilon\right)\int_M \phi^2|\nabla h^q|^2\cdot\mathrm{e}^{-f} \\
  &\leq&
  \left(\frac{|qa-1|}{\varepsilon}+qa\right)\frac{C}{R^2}\int_M h^{2q}\cdot\mathrm{e}^{-f}+qb\int_M \phi^2h^{2q}\cdot\mathrm{e}^{-f}.
 \end{eqnarray*}
 Letting $R\rightarrow \infty$, and then $\varepsilon\rightarrow 0$, we get (\ref{dirichlet2}).

 By variational principle and Cauchy-Schwartz inequality,
 \begin{eqnarray*}
  \lambda_1(\Delta_f)\int_M \phi^{2}h^{2q}\cdot\mathrm{e}^{-f}&\leq& \int_M |\nabla (\phi h^q) |^2\cdot\mathrm{e}^{-f}\\
  &\leq& \int_M \phi^{2}|\nabla h^q |^2\cdot\mathrm{e}^{-f}+\int_M h^{2q}|\nabla \phi |^2\cdot\mathrm{e}^{-f}\\
  &&+2\left( \int_M \phi^{2}|\nabla h^q |^2\cdot\mathrm{e}^{-f} \right)^{\frac{1}{2}}\left( \int_M h^{2q}|\nabla \phi |^2\cdot\mathrm{e}^{-f} \right)^{\frac{1}{2}}.
 \end{eqnarray*}
 Letting $R\rightarrow\infty$, we obtain (\ref{eigen inequ}).
\end{proof}

Notice that an $L^p$ harmonic form $\alpha$ satisfies $\mathrm{d}\alpha=0$ and $\delta_f\alpha=0$, the following Bochner type inequality for $L_f^2$ harmonic 1-forms in \cite{Vi13,Zh20} also holds, which is compared to Lemma 2.1 in \cite{Li05} for $f$-harmonic functions.
\begin{lem}\label{bochner inequ}
 Let $\omega$ be an $L_f^p$ harmonic 1-form on an $n$-dimensional complete smooth metric measure space $(M,g,\mathrm{e}^{-f}\mathrm{d}v)$ and $m\geq n$ be any constant. Then
 \begin{eqnarray}\label{bochner inequality}
  |\omega|\Delta_f|\omega|\geq\frac{|\nabla|\omega||^2}{m-1}+\mathrm{Ric}_{m,n}(\omega,\omega).
 \end{eqnarray}
 Equality holds iff
 $$
  \left(\omega_{i,j}\right)=\left(\begin{array}{ccccc}{-(m-1) \mu} & {0} & {0} & {\ldots} & {0} \\
  {0} & {\mu} & {0} & {\ldots} & {0} \\ {0} & {0} & {\mu} & {\ldots} & {0} \\
  {\vdots} &  {\vdots} & {\vdots} & {\ddots} & {\vdots} \\
  {0} &  {0} & {0} & {\ldots} & {\mu}\end{array}\right),
 $$
 where $\mu=\dfrac{\langle\nabla f,\omega\rangle}{n-m}$.
\end{lem}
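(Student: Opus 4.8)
The plan is to run the Bochner technique in the weighted setting and then sharpen the Kato inequality so that the $m$-Bakry--\'Emery tensor (rather than $\mathrm{Ric}+\nabla^2 f$) appears. Assume $m>n$; the case $m=n$ is the classical one, with $f$ constant. Since $\mathrm{d}\omega=0$ and $\delta_f\omega=0$, the weighted Weitzenb\"ock formula on $1$-forms, $(\mathrm{d}\delta_f+\delta_f\mathrm{d})\omega=(\nabla^*\nabla+\nabla_{\nabla f})\omega+\mathrm{Ric}_f(\omega,\cdot)$ with $\mathrm{Ric}_f=\mathrm{Ric}+\nabla^2 f$, forces $(\nabla^*\nabla+\nabla_{\nabla f})\omega=-\mathrm{Ric}_f(\omega,\cdot)$. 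Pairing with $\omega$ and using the identity $\tfrac12\Delta_f|\omega|^2=|\nabla\omega|^2-\langle(\nabla^*\nabla+\nabla_{\nabla f})\omega,\omega\rangle$ gives $\tfrac12\Delta_f|\omega|^2=|\nabla\omega|^2+\mathrm{Ric}_f(\omega,\omega)$. Inserting $\tfrac12\Delta_f|\omega|^2=|\omega|\Delta_f|\omega|+|\nabla|\omega||^2$ and $\mathrm{Ric}_f=\mathrm{Ric}_{m,n}+\tfrac{\mathrm{d}f\otimes\mathrm{d}f}{m-n}$, we obtain, at every point where $\omega\neq0$,
$$|\omega|\Delta_f|\omega|=|\nabla\omega|^2-|\nabla|\omega||^2+\mathrm{Ric}_{m,n}(\omega,\omega)+\frac{\langle\nabla f,\omega\rangle^2}{m-n}.$$

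It then suffices to establish the pointwise refined Kato inequality
$$|\nabla\omega|^2+\frac{\langle\nabla f,\omega\rangle^2}{m-n}\ \geq\ \frac{m}{m-1}\,|\nabla|\omega||^2.$$
I would prove this at a fixed point $x$ with $\omega(x)\neq0$ by choosing an orthonormal frame $\{e_i\}$ with $e_1=\omega^\sharp/|\omega|$ and writing $\omega_{ij}=\nabla_i\omega_j$. Closedness $\mathrm{d}\omega=0$ makes $(\omega_{ij})$ symmetric, $\delta_f\omega=0$ reads $\sum_i\omega_{ii}=\langle\nabla f,\omega\rangle$, and one computes $\nabla_k|\omega|=\omega_{1k}$, so $|\nabla|\omega||^2=\sum_k\omega_{1k}^2$ and $|\nabla\omega|^2=\omega_{11}^2+2\sum_{k\geq2}\omega_{1k}^2+\sum_{i,k\geq2}\omega_{ik}^2$. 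Since $m\geq n\geq3$ gives $2\geq\tfrac{m}{m-1}$, the terms $\omega_{1k}$ with $k\geq2$ may be dropped, and bounding $\sum_{i,k\geq2}\omega_{ik}^2\geq\sum_{i\geq2}\omega_{ii}^2\geq\tfrac{1}{n-1}\big(\sum_{i\geq2}\omega_{ii}\big)^2$ by Cauchy--Schwarz reduces the claim to the one-variable estimate
$$\frac{t^2}{n-1}+\frac{(\omega_{11}+t)^2}{m-n}\ \geq\ \frac{\omega_{11}^2}{m-1},\qquad t:=\sum_{i\geq2}\omega_{ii},$$
whose left-hand side is minimized at $t=-\tfrac{n-1}{m-1}\omega_{11}$ with minimum value exactly $\tfrac{\omega_{11}^2}{m-1}$. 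Combining the Kato inequality with the displayed identity yields $|\omega|\Delta_f|\omega|\geq\tfrac{1}{m-1}|\nabla|\omega||^2+\mathrm{Ric}_{m,n}(\omega,\omega)$ wherever $\omega\neq0$; on the zero set it holds trivially, so it is valid on $M$ in the distributional sense, $|\omega|$ being locally Lipschitz.

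For the equality characterization I would trace back the three inequalities used. Equality in $2\sum_{k\geq2}\omega_{1k}^2\geq\tfrac{m}{m-1}\sum_{k\geq2}\omega_{1k}^2$ forces $\omega_{1k}=0$ for $k\geq2$; equality in Cauchy--Schwarz together with discarding the off-diagonal block of $(\omega_{ik})_{i,k\geq2}$ forces $\omega_{ik}=\mu\,\delta_{ik}$ for $i,k\geq2$; and equality in the one-variable estimate forces $\sum_{i\geq2}\omega_{ii}=-\tfrac{n-1}{m-1}\omega_{11}$, i.e. $\omega_{11}=-(m-1)\mu$. Feeding this back into $\sum_i\omega_{ii}=\langle\nabla f,\omega\rangle$ gives $(n-m)\mu=\langle\nabla f,\omega\rangle$, i.e. $\mu=\tfrac{\langle\nabla f,\omega\rangle}{n-m}$, which is precisely the matrix displayed in the statement.

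The main obstacle is the sharp Kato step: one must split $|\nabla\omega|^2$ in exactly the right way and use the constraints $\mathrm{d}\omega=0$ and $\delta_f\omega=0$ so that all auxiliary inequalities become equalities simultaneously, since this is what produces the precise equality matrix and the constant $\tfrac{1}{m-1}$. The weighted Weitzenb\"ock input and the algebraic rearrangement leading to the displayed identity are then routine.
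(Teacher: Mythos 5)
Your proof is correct. The paper itself does not prove this lemma (it defers to the cited references \cite{Vi13,Zh20} and the analogue in \cite{Li05}), and your argument --- weighted Weitzenb\"ock plus Bochner identity to reach $|\omega|\Delta_f|\omega|=|\nabla\omega|^2-|\nabla|\omega||^2+\mathrm{Ric}_{m,n}(\omega,\omega)+\tfrac{\langle\nabla f,\omega\rangle^2}{m-n}$, followed by the refined Kato inequality obtained from symmetry of $(\omega_{ij})$, the constraint $\sum_i\omega_{ii}=\langle\nabla f,\omega\rangle$, Cauchy--Schwarz on the trace, and the one-variable minimization --- is exactly the standard route taken in those sources, with the equality analysis correctly yielding the stated matrix and $\mu=\tfrac{\langle\nabla f,\omega\rangle}{n-m}$.
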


\section{Vanishing and splitting results}

We present the main results in this section, and adopt Han-Lin and Vieira's idea in \cite{HL17}, \cite{Vi16} for the proofs.
\begin{thm}\label{main1}
 Suppose $(M,g,\mathrm{e}^{-f}\mathrm{d}v)$ is a complete non-compact smooth metric measure space of dimension $n\geq 3$ with positive first spectrum $\lambda_1(\Delta_f)$, and $p>\dfrac{m-2}{m-1}$ is a constant. If the $m$-Bakry-\'Emery Ricci curvature satisfies
 \begin{eqnarray*}
   \mathrm{Ric}_{m,n} \geq -\dfrac{4[(m-1)p-(m-2)]}{(m-1)p^2}\lambda_1(\Delta_f),
 \end{eqnarray*}
 then, either\\
 $(1).$ $\mathcal{H}^1(L_f^p(M))=\{0\}$; or\\
 $(2).$ $\widetilde{M}=\mathbb{R}\times N$, where $\widetilde{M}$ is the universal cover of $M$ and $N$ is a manifold of dimension $n-1$.
\end{thm}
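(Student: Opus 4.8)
The plan is to run a weighted Bochner argument on $h = |\omega|$ and show that in the critical case the inequalities collapse to equalities, forcing a splitting. First I would take $\omega \in \mathcal{H}^1(L_f^p(M))$ nonzero and set $h = |\omega| \geq 0$, noting $h \in L_f^p(M)$. By Lemma \ref{bochner inequ},
\begin{eqnarray*}
 h\Delta_f h \geq \frac{|\nabla h|^2}{m-1} + \mathrm{Ric}_{m,n}(\omega,\omega) \geq \frac{|\nabla h|^2}{m-1} - \dfrac{4[(m-1)p-(m-2)]}{(m-1)p^2}\lambda_1(\Delta_f)\, h^2,
\end{eqnarray*}
so $h$ satisfies (\ref{bochner inequ1}) with $A = \frac{1}{m-1}$ and $B = \dfrac{4[(m-1)p-(m-2)]}{(m-1)p^2}\lambda_1(\Delta_f)$. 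The condition $p > \frac{m-2}{m-1}$ is exactly $p - 1 + A > 0$, and $A = \frac{1}{m-1} < 1$ since $m \geq n \geq 3$, so Lemma \ref{lem1} applies and gives
\begin{eqnarray*}
 \int_M |\nabla h^{p/2}|^2 \cdot \mathrm{e}^{-f}\mathrm{d}v \leq \frac{p^2 B}{4(p-1+A)} \int_M h^p \cdot \mathrm{e}^{-f}\mathrm{d}v.
\end{eqnarray*}
A direct computation shows $\frac{p^2 B}{4(p-1+A)} = \lambda_1(\Delta_f)$ precisely because $B$ carries the critical constant; meanwhile the variational characterization of $\lambda_1(\Delta_f)$ applied to $\phi h^{p/2}$ with the cut-off (\ref{cutoff}), together with Cauchy–Schwarz and letting $R \to \infty$ (exactly as in the second half of Lemma \ref{lem2}), gives the reverse inequality $\lambda_1(\Delta_f)\int_M h^p\,\mathrm{e}^{-f}\mathrm{d}v \leq \int_M |\nabla h^{p/2}|^2\,\mathrm{e}^{-f}\mathrm{d}v$. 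Hence all these inequalities are equalities.

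From the equality case I would extract rigidity in two stages. Equality in the Bochner inequality (\ref{bochner inequality}) forces the Hessian-type structure of Lemma \ref{bochner inequ}: $(\omega_{i,j})$ has the displayed diagonal form with eigenvalue $\mu = \langle\nabla f,\omega\rangle/(n-m)$ and one eigenvalue $-(m-1)\mu$; moreover equality in the curvature bound forces $\mathrm{Ric}_{m,n}(\omega,\omega) = -B h^2$ along $\omega$. Equality in Lemma \ref{lem1}'s proof also forces the test-function/Cauchy–Schwarz steps to be equalities in the limit, which pins down $|\nabla h^{p/2}|$ as a genuine $L_f^2$ eigenfunction of $\Delta_f$ (not merely an approximate one), i.e. $h^{p/2}$ realizes $\lambda_1(\Delta_f)$. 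Combining the pointwise rigidity of $\omega_{i,j}$ with $\mathrm{d}\omega = 0$ and $\delta_f\omega = 0$, one sees the dual vector field $X = \omega^\sharp$ has $\nabla X$ of rank essentially one, so on the universal cover $\widetilde M$ (where $\widetilde\omega$ is exact, $\widetilde\omega = du$) the level sets of $u$ foliate $\widetilde M$ by totally geodesic hypersurfaces and $|\nabla u|$ depends only on $u$; a de Rham-type argument then yields $\widetilde M = \mathbb{R} \times N^{n-1}$ as a metric product (after absorbing $|\nabla u|$ into a reparametrization of the $\mathbb{R}$-factor, which is where one checks $\mu$ and the weight $f$ are compatible with the splitting). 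This last de Rham / splitting step is the main obstacle: one must verify that the one-dimensional distribution spanned by $\nabla u$ is parallel, which requires translating the matrix form of $(\omega_{i,j})$ together with the equality $\mathrm{Ric}_{m,n}(\omega,\omega) = -B|\omega|^2$ into the statement that the $\mu$-eigendirection and its orthogonal complement are each parallel — and handling the weight function $f$ carefully, since the splitting is of the Riemannian metric while $f$ a priori need not split. I would follow the argument in \cite{DS14} and \cite{Vi16} here, adapting the computation of $\mathrm{div}$ and the Bochner term to the $f$-setting, and remark that when $\langle \nabla f, \omega\rangle \equiv 0$ the rigidity reduces to the classical parallel case $\mu = 0$, $\nabla\omega = 0$.

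One caveat to address: since for general $p \neq 2$ an $L_f^p$ harmonic $1$-form need not be $\Delta_f$-harmonic (as the remark after $\mathcal{H}_k(L_f^p(M))$ notes), the argument must use only $\mathrm{d}\omega = 0$ and $\delta_f\omega = 0$, which is exactly what Lemma \ref{bochner inequ} is stated for; no appeal to $\Delta_f\omega = 0$ is needed. I would also note that the alternative in the theorem is genuine: if the equalities above are \emph{not} all forced — which can only fail if $h \equiv 0$ — then $\omega \equiv 0$, giving case $(1)$; otherwise $h > 0$ somewhere and, by the strong maximum principle applied to the (now equality) elliptic equation, $h > 0$ everywhere, so the rigidity analysis runs globally and gives case $(2)$.
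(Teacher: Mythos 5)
Your proposal is correct and follows essentially the same route as the paper: the Bochner inequality of Lemma \ref{bochner inequ}, Lemma \ref{lem1} with $A=\tfrac{1}{m-1}$ and the critical constant $B=a\lambda_1(\Delta_f)$ giving $\int_M|\nabla h^{p/2}|^2\mathrm{e}^{-f}\le\lambda_1(\Delta_f)\int_M h^p\mathrm{e}^{-f}$, the reverse variational inequality to force equality, and then the rigidity of $(\omega_{i,j})$ and the splitting of the universal cover via $\widetilde\omega=\mathrm{d}u$, deferred to the literature in both treatments. The one step worth making explicit is the passage from equality of the two integrals to \emph{pointwise} equality in (\ref{bochner inequality}): this requires re-running the cut-off computation so that the nonnegative Bochner defect, integrated against $\phi^2$, is squeezed to zero — which is precisely the displayed calculation in the paper's proof.
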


\begin{proof}
 If $\mathcal{H}^1(L_f^p(M))=\{0\}$, it's done. Otherwise, choose a non-trivial $L_f^{p}$ harmonic $1$-form $\omega\in\mathcal{H}^1(L_f^p(M))$, and let $h=|\omega|$, $2q=p$, $a=\dfrac{4[(m-1)p-(m-2)]}{(m-1)p^2}$. By (\ref{bochner inequality}) and the condition on $\mathrm{Ric}_{m,n}$, we have
 \begin{eqnarray}\label{bochner inequ3}
  h\Delta_f h\geq \frac{1}{m-1}|\nabla h|^2-a\lambda_1(\Delta_f)h^2.
 \end{eqnarray}
 Applying Lemma \ref{lem1} with $A=\dfrac{1}{m-1}$ and $B=a\lambda_1(\Delta_f)$, we obtain
 $$
  \int_M |\nabla h^{\frac{p}{2}}|\cdot\mathrm{e}^{-f}\mathrm{d}v\leq \lambda_1(\Delta_f)\int_M h^{p}\cdot\mathrm{e}^{-f}\mathrm{d}v,
 $$
 so the integral $\int_M |\nabla h^{\frac{p}{2}}|\cdot\mathrm{e}^{-f}\mathrm{d}v$ is finite.
 Moreover, (\ref{bochner inequ3}) implies
 \begin{eqnarray}\label{pbochner inequ1}
  0\leq h^q\Delta_f h^q-\frac{q(m-1)-(m-2)}{q(m-1)}|\nabla h^q|^2+qa\lambda_1(\Delta_f)h^{2q}.
 \end{eqnarray}
 Then by multiplying the cut-off function $\phi^2$ and taking integration, we get
 \begin{eqnarray*}
  0&\leq& \int_M \left( h^q\Delta_f h^q-\frac{q(m-1)-(m-2)}{q(m-1)}|\nabla h^q|^2+qa\lambda_1(\Delta_f)h^{2q} \right)\phi^2\cdot \mathrm{e}^{-f}\\
  &\leq& -\int_M \langle \nabla(\phi^2 h^q),\nabla h^q \rangle\cdot \mathrm{e}^{-f}-\frac{q(m-1)-(m-2)}{q(m-1)}\int_M \phi^2 |\nabla h^q|^2 \cdot \mathrm{e}^{-f}\\
  && + qa\int_M |\nabla(\phi h^q)|^2\cdot \mathrm{e}^{-f}\\
  &\stackrel{R\rightarrow\infty}{\longrightarrow}& -\int_M |\nabla h^q|^2 \cdot \mathrm{e}^{-f}-\frac{q(m-1)-(m-2)}{q(m-1)}\int_M |\nabla h^q|^2 \cdot \mathrm{e}^{-f}\\
  && + qa\int_M |\nabla h^q|^2\cdot \mathrm{e}^{-f}\\
  &=& 0.
 \end{eqnarray*}
 This forces equality holds in (\ref{pbochner inequ1}), and so is that in (\ref{bochner inequ3}). Hence, the equality holds in (\ref{bochner inequality}). By Lemma \ref{bochner inequ},
 $$
  \left(\omega_{i,j}\right)=\left(\begin{array}{ccccc}{-(m-1) \mu} & {0} & {0} & {\ldots} & {0} \\
  {0} & {\mu} & {0} & {\ldots} & {0} \\ {0} & {0} & {\mu} & {\ldots} & {0} \\
  {\vdots} &  {\vdots} & {\vdots} & {\ddots} & {\vdots} \\
  {0} &  {0} & {0} & {\ldots} & {\mu}\end{array}\right),
 $$
 where $\mu=\dfrac{\langle\nabla f,\omega\rangle}{n-m}$. Suppose $\widetilde{M}$ is a universal cover of $M$, and $\widetilde{\omega}$ is the lifting of $\omega$ on $\widetilde{M}$. Since $\widetilde{M}$ is simply connected and $\widetilde{\omega}$ is closed, there exists a function $u$ on $\widetilde{M}$ such that $\mathrm{d}u=\widetilde{\omega}$. Hence, the Hessian matrix of $u$ is given by
 $$
  \left(u_{,ij}\right)=\left(\begin{array}{ccccc}{-(m-1) \nu} & {0} & {0} & {\ldots} & {0} \\
  {0} & {\nu} & {0} & {\ldots} & {0} \\ {0} & {0} & {\nu} & {\ldots} & {0} \\
  {\vdots} &  {\vdots} & {\vdots} & {\ddots} & {\vdots} \\
  {0} &  {0} & {0} & {\ldots} & {\nu}\end{array}\right),
 $$
 where $\nu=\dfrac{\langle\nabla f,\nabla u\rangle}{n-m}$.
 The splitting argument is the similar to that of Li and Wang \cite{LW06}, page 946, and the same as Fu \cite{Fu14}, page 10, so we omit here.
\end{proof}

\begin{rem}
 If $p=2, m=n$, this is just Theorem 2.2 in \cite{DS14}. Besides, if $p=\dfrac{m-2}{m-1}$, then $\dfrac{4[(m-1)p-(m-2)]}{(m-1)p^2}=0$, and the curvature assumption becomes $\mathrm{Ric}_{m,n} \geq 0$. This is the critical case of the conditions in Theorem 3.2 in \cite{HL17}, and it's interesting to consider the geometry of such metric measure spaces.
\end{rem}

Like Vieira did in Theorem 4 and 7 in \cite{Vi16} for $L^2$ harmonic $1$-forms, here we show that with a more general curvature assumption, if the first spectrum of the $f$-Laplacian has certain lower bound, we have
\begin{thm}\label{main2}
 Suppose $(M,g,\mathrm{e}^{-f}\mathrm{d}v)$ is a complete non-compact smooth metric measure space of dimension $n \geq 3$ satisfying a weighted Poincar\'e inequality with weight function $\rho$. Assume the $m$-Bakry-\'Emery Ricci curvature satisfies
  $$\mathrm{Ric}_{m,n}\geq -a\rho-b,$$
  for some $a<\dfrac{4[(m-1)p-(m-2)]}{(m-1)p^2}$ with $p\geq \dfrac{m-2}{m-1}$ and $b>0$.

  If $\lambda_1(\Delta_f)>\dfrac{b}{\frac{4[(m-1)p-(m-2)]}{(m-1)p^2}-a}$, then $H^1(L^p_f(M))=\{0\}$.

  Moreover, if $\lambda_1(\Delta_f)= \dfrac{b}{\frac{4[(m-1)p-(m-2)]}{(m-1)p^2}-a}$,
 then either

 $(1)$. $H^1(L^p_f(M))=\{0\}$; or

 $(2)$. $\widetilde{M}=\mathbb{R}\times N$, where $\widetilde{M}$ is the universal cover of $M$ and $N$ is a manifold of dimension $n-1$.
\end{thm}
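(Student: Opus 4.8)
The plan is to mimic the proof of Theorem \ref{main1}, but to use Lemma \ref{lem2} in place of Lemma \ref{lem1} so as to absorb both the $-a\rho$ and the $-b$ parts of the curvature bound, and then to close the argument with the extra eigenvalue inequality (\ref{eigen inequ}) that Lemma \ref{lem2} supplies. Concretely, suppose $\mathcal{H}^1(L_f^p(M))\neq\{0\}$ and pick a nontrivial $\omega\in\mathcal{H}^1(L_f^p(M))$, set $h=|\omega|$, $2q=p$, and write $a_0=\dfrac{4[(m-1)p-(m-2)]}{(m-1)p^2}$. Combining the Bochner inequality (\ref{bochner inequality}) with the hypothesis $\mathrm{Ric}_{m,n}\geq -a\rho-b$ gives
\begin{eqnarray*}
 h\Delta_f h\geq \frac{1}{m-1}|\nabla h|^2-a\rho h^2-bh^2,
\end{eqnarray*}
which is exactly (\ref{bochner inequ2}) with $A=\frac{1}{m-1}$. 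The numerical hypotheses of Lemma \ref{lem2} hold: $A=\frac{1}{m-1}<1$, $a,b>0$, and $4(p-1+A)-p^2a=(m-1)p^2(a_0-a)/(\cdots)>0$ precisely because $a<a_0$ and $p\geq\frac{m-2}{m-1}$ (so $p-1+A\geq 0$, with the degenerate case handled as in the remark). Hence (\ref{dirichlet2}) gives the Caccioppoli-type bound
$$\int_M |\nabla h^{q}|^2\cdot\mathrm{e}^{-f}\mathrm{d}v\leq \frac{p^2 b}{4(p-1+A)-p^2a}\int_M h^{p}\cdot\mathrm{e}^{-f}\mathrm{d}v,$$
in particular $\int_M |\nabla h^q|^2\,\mathrm{e}^{-f}<\infty$, and (\ref{eigen inequ}) gives
$$\lambda_1(\Delta_f)\int_M h^p\cdot\mathrm{e}^{-f}\mathrm{d}v\leq \int_M |\nabla h^q|^2\cdot\mathrm{e}^{-f}\mathrm{d}v.$$

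Chaining these two inequalities yields $\lambda_1(\Delta_f)\int_M h^p\,\mathrm{e}^{-f}\leq \dfrac{b}{a_0-a}\int_M h^p\,\mathrm{e}^{-f}$ (after cancelling the common algebraic factor $\frac{p^2}{4(p-1+A)}$ between $b$ and $a_0$, i.e.\ noting $\frac{p^2 b}{4(p-1+A)-p^2a}=\frac{b}{a_0-a}$). If $\lambda_1(\Delta_f)>\frac{b}{a_0-a}$ strictly, this forces $\int_M h^p\,\mathrm{e}^{-f}=0$, hence $h\equiv 0$ and $\omega\equiv 0$, a contradiction; so $\mathcal{H}^1(L_f^p(M))=\{0\}$. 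This disposes of the first assertion. For the borderline case $\lambda_1(\Delta_f)=\frac{b}{a_0-a}$, equality must hold throughout the chain; I would then unwind it exactly as in the proof of Theorem \ref{main1}: equality in (\ref{eigen inequ}) combined with equality in (\ref{dirichlet2}) forces, via the refined integration-by-parts/cut-off argument (inserting $\phi^2$, using $|\nabla\phi|^2\leq C/R^2$, and letting $R\to\infty$ then $\varepsilon\to 0$), equality in the pointwise Bochner inequality (\ref{bochner inequality}). Lemma \ref{bochner inequ} then gives the rigid form of $(\omega_{i,j})$, one lifts $\omega$ to the universal cover $\widetilde M$ to get a function $u$ with $\mathrm{d}u=\widetilde\omega$ and the displayed Hessian, and the de Rham splitting $\widetilde M=\mathbb{R}\times N$ follows exactly as in Li-Wang \cite{LW06} and Fu \cite{Fu14}.

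The main obstacle I anticipate is the borderline case: one must carefully reconstruct, from the two a priori integral inequalities becoming equalities in the limit, a \emph{pointwise} equality in the Bochner inequality. The subtlety is that (\ref{dirichlet2}) and (\ref{eigen inequ}) were obtained only after passing $R\to\infty$ (and, for (\ref{dirichlet2}), $\varepsilon\to 0$), so one does not immediately get compactly supported test functions realizing equality; the correct move is to return to the pre-limit inequalities with the cut-off $\phi$ in place, carry the same manipulation used to derive (\ref{pbochner inequ1}) in Theorem \ref{main1} but now with the weighted-Poincar\'e term and the $b$-term both present, and show the error terms controlled by $C/R^2$ and by $\varepsilon$ vanish, squeezing every intermediate inequality to an equality. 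One must also double-check the single algebraic identity $\frac{p^2 b}{4(p-1+A)-p^2 a}=\frac{b}{a_0-a}$ with $A=\frac{1}{m-1}$, which is what makes the threshold in the statement match the threshold coming out of Lemma \ref{lem2}; this is routine but essential. Everything else is a direct transcription of the Theorem \ref{main1} argument with $\lambda_1(\Delta_f)$ replaced by the weighted-Poincar\'e data.
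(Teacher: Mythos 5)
Your proposal is correct and follows essentially the same route as the paper: apply Lemma \ref{lem2} with $A=\tfrac{1}{m-1}$, use the identity $4(p-1+A)-p^2a=p^2\bigl(\tfrac{4[(m-1)p-(m-2)]}{(m-1)p^2}-a\bigr)$ to match the threshold, chain (\ref{dirichlet2}) with (\ref{eigen inequ}) for the strict case, and in the borderline case force equality back through the cut-off/integration-by-parts argument to get pointwise equality in (\ref{bochner inequality}) and hence the splitting. The "main obstacle" you flag is exactly how the paper handles it (its displayed computation following (\ref{bochner func3})), so nothing further is needed.
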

\begin{proof}
 For any $L_f^p$ harmonic 1-form $\omega$, let $h=|\omega|$, so
 \begin{eqnarray}\label{bochner func2}
  h\Delta_f h\geq \frac{1}{m-1}|\nabla h|^2-a\rho h^2-bh^2.
 \end{eqnarray}
 Applying Lemma \ref{lem2} with $A=\dfrac{1}{m-1}$, we have
 \begin{eqnarray}\label{int inequal}
  \int_M |\nabla h^{\frac{p}{2}}|^2\cdot \mathrm{e}^{-f}\mathrm{d}v
  \leq \dfrac{b}{\frac{4[(m-1)p-(m-2)]}{(m-1)p^2}-a}\int_M h^p\cdot \mathrm{e}^{-f}\mathrm{d}v<\infty,
 \end{eqnarray}
 and by (\ref{eigen inequ}), we have
 \begin{eqnarray}\label{Pinequ}
  \lambda_{1}(\Delta_f)\int_M h^p\cdot \mathrm{e}^{-f}\mathrm{d}v
  \leq \int_M |\nabla h^{\frac{p}{2}}|^2\cdot \mathrm{e}^{-f}\mathrm{d}v.
 \end{eqnarray}
 Suppose $\lambda_1(\Delta_f)>\dfrac{b}{\frac{4[(m-1)p-(m-2)]}{(m-1)p^2}-a}$, if $\omega$ is non-trivial, then (\ref{int inequal}) and (\ref{Pinequ}) implies
 $$\lambda_1(\Delta_f)\leq\dfrac{b}{\frac{4[(m-1)p-(m-2)]}{(m-1)p^2}-a},$$
 which is a contradiction.
 Hence, in this case, $\mathcal{H}^1(L_f^p(M))=\{ 0 \}$.

 Suppose $\lambda_1(\Delta_f)=\dfrac{b}{\frac{4[(m-1)p-(m-2)]}{(m-1)p^2}-a}$ and $\mathcal{H}^1(L_f^p(M))$ is non-trivial, then (\ref{int inequal}) and (\ref{Pinequ}) imply that equality holds in (\ref{int inequal}). In addition, by (\ref{bochner func2}),
 \begin{eqnarray}\label{bochner func3}
  0\leq h^{\frac{p}{2}}\Delta_f h^{\frac{p}{2}} - \left[1-\frac{2(m-2)}{p(m-1)}\right]|\nabla h^{\frac{p}{2}}|^2+\frac{p}{2}a\rho h^p+\frac{p}{2}bh^p.
 \end{eqnarray}
Hence, by multiplying $\phi^2$ on both sides of (\ref{bochner func3}), taking integration, integrating by parts and the weighted Poincar\'e inequality, we have
 \begin{eqnarray*}
  0 &\leq& \int_M \left(h^{\frac{p}{2}}\Delta_f h^{\frac{p}{2}} - \left(1-\frac{2(m-2)}{p(m-1)}\right)|\nabla h^{\frac{p}{2}}|^2+\frac{p}{2}a\rho h^p+\frac{p}{2}bh^p \right)\phi^2\cdot\mathrm{e}^{-f}\\
  &\leq& -\int_M \langle \nabla(\phi^2 h^{\frac{p}{2}}),\nabla h^{\frac{p}{2}} \rangle \cdot\mathrm{e}^{-f}-\left[1-\frac{2(m-2)}{p(m-1)}\right]\int_M |\nabla h^{\frac{p}{2}}|^2\phi^2\cdot\mathrm{e}^{-f}\\
  && +\frac{ap}{2}\int_M |\nabla (\phi h^{\frac{p}{2}})|^2\cdot\mathrm{e}^{-f}+\frac{bp}{2}\int_M \phi^2 h^p\cdot\mathrm{e}^{-f}\\
  &\stackrel{R\rightarrow\infty}{\longrightarrow}& -\int_M |\nabla h^{\frac{p}{2}}|^2 \cdot\mathrm{e}^{-f}-\left[1-\frac{2(m-2)}{p(m-1)}\right]\int_M |\nabla h^{\frac{p}{2}}|^2\cdot\mathrm{e}^{-f}\\
  && +\frac{ap}{2}\int_M |\nabla h^{\frac{p}{2}}|^2\cdot\mathrm{e}^{-f}+\frac{bp}{2}\int_M  h^p\cdot\mathrm{e}^{-f}\\
  &=& \frac{p}{2}\left[\left( -\frac{4((m-1)p-(m-2))}{p^2(m-1)}+a \right)\int_M |\nabla h^{\frac{p}{2}}|^2 \cdot\mathrm{e}^{-f}+b\int_M  h^p\cdot\mathrm{e}^{-f}\right]\\
  &=& 0,
 \end{eqnarray*}
 since equality holds in (\ref{int inequal}).
 This forces equality holds in (\ref{bochner func3}), and so is equality in (\ref{bochner func2}). Finally, equality holds in (\ref{bochner inequality}). Then the splitting argument is the same as that in Theorem \ref{main1}.
 \end{proof}
\begin{rem}
 If $p=2, m=n$, this is just Theorem 4 and 7 in \cite{Vi16}.
\end{rem}

\vspace{0.5cm}\noindent\textbf{Acknowledgments}.  The authors would like to thank Prof. Xingwang Xu and Prof. Detang Zhou for continual support of their research. The first author is partially supported by NSFC Grants 11771377 and the Natural Science Foundation of Jiangsu Province (BK20191435). The second author is partially supported by NSFC Grants 11801229.

\end{document}